\tikzset{every state/.style={minimum size=0pt}}
\newtheorem{theorem}{Theorem}[section]
\newtheorem{corollary}[theorem]{Corollary}
\newtheorem{proposition}[theorem]{Proposition}
\theoremstyle{definition}
\newtheorem{definition}[theorem]{Definition}
\newtheorem{construction}[theorem]{Construction}
\theoremstyle{definition}   
\newtheorem{remark}[theorem]{Remark}
\newcommand\DELETE[1]{}
\begin{document}


\title{{\bf Fundamentals of pushable homomorphisms of oriented graphs}}
\author{
{\sc Tapas Das}$\,^{a}$, {\sc Pavan P D}$\,^{a,c}$, {\sc Sagnik Sen}$\,^{a}$, {\sc S Taruni}$\,^{a,b}$\\
\mbox{}\\
{\small $(a)$ Indian Institute of Technology Dharwad, India.}\\
 {\small $(b)$ Centro de Modelamiento Matemático (CNRS IRL2807),} \\
 {\small Universidad de Chile, Santiago, Chile. }\\
 {\small $(c)$ University of Turku, FI-20014 Turku, Finland,}
}

\date{}

\maketitle

\begin{abstract}
To push a vertex $v$ of an oriented graph $\overrightarrow{G}$ is to reverse the direction of the arcs incident to $v$. 
A homomorphism of an oriented graph $\overrightarrow{G}$ to another oriented graph $\overrightarrow{H}$ is an arc preserving vertex mapping. 
Suppose that it is possible to obtain a $\overrightarrow{G}'$ (called push equivalent to $\overrightarrow{G}$) by pushing some vertices of 
$\overrightarrow{G}$ such that 
$\overrightarrow{G}'$ admits a homomorphism to $\overrightarrow{H}$. 
Then we say that $\overrightarrow{G}$ admits a pushable homomorphism to $\overrightarrow{H}$.  

In this article, we characterize push equivalent orientations of graphs using cycles through presenting the oriented analogue of the well-known Zaslavsky's Lemma from the theory of signed graphs. We then apply our result to show that the problem of determining whether two given orientations of a graph are push equivalent or not can be solved in polynomial time, to count the number of non-push equivalent orientations of a graph, and to provide a canonical definition of pushable homomorphism.  
    
We also provide an one-to-one correspondence between oriented and signed bipartite graphs which commutes with pushable (resp.,  switchable) homomorphisms. In terms of category theory, we prove a categorical isomorphism between the categories of bipartite oriented and signed graphs (where morphisms are pushable and switchable homomorphisms, respectively). Moroever, we apply our correspondence to translate a number of important results directly from the theory of signed graphs to oriented graphs. In particular, we show that pushable homomorphisms of bipartite graphs capture the entire theory of graph coloring as a subcase. 
    
Note that, a complete complexity dichotomy characterization of the 
pushable $\overrightarrow{H}$-coloring problem, that is the problem of determining whether an input oriented graph $\overrightarrow{G}$ admits a pushable homomorphism to $\overrightarrow{H}$, is known. We partially refine the results by showing  that the pushable $\overrightarrow{H}$-coloring problem, where $\overrightarrow{H}$ is a directed odd cycle, is NP-complete even for sparse graphs. On a related note, we also show how the pushable $\overrightarrow{H}$-coloring problem completely encodes graph coloring, where $\overrightarrow{H}$ varies over all directed odd cycles.
\end{abstract}

\noindent \textbf{Keywords:} homomorphisms, oriented colorings,  push operation, oriented graphs, signed graphs.

\section{Introduction and preliminaries}
In 1994, the study of oriented coloring was initiated by Courcelle~\cite{courcelle-monadic} in one of the papers from his series of works which established the illustrious Courcelle's theorem~\cite{courcelle1990monadic}. Following that, Raspaud and Sopena~\cite{planar80} influenced a large body of work (see~\cite{orientedchi, sopena2016homomorphisms} for detailed surveys)   
dedicated to the study and extensions of the theory of oriented coloring. In recent times, oriented coloring is 
primarily studied under the umbrella of homomorphisms of oriented graphs (apart from the surveys~\cite{orientedchi, sopena2016homomorphisms}, one can consult some notable papers:~\cite{borodin2001deeply, borodin2004homomorphisms, marshall17, nandy2016outerplanar, ochem2017homomorphisms}).

The study of homomorphisms of oriented graphs seems to have an uncanny similarity to that of homomorphisms of $2$-edge colored graphs, also known as signed graphs~\cite{naserasr2015homomorphisms}. However, for signed graphs, two particular types of homomorphisms are studied, namely, sign-preserving homomorphisms and switchable homomorphisms\footnote{This particular type of homomorphism has been studied under various different names in literature including ``homomorphism of signed graphs'', ``switching homomorphism", ``switchable homomorphism'', etc.}. While the homomorphisms of oriented graphs is 
analogous to the sign-preserving homomorphism of signed graphs, a recent variant of the former, namely, the pushable homomorphism of oriented graphs is found to be analogous to the switchable homomorphism of signed graphs.
The study of switchable homomorphisms of signed graphs has gained a lot of popularity due to its connections to the theories of graph coloring, circular coloring, graph minors, network flows, and to Ramsey theory~\cite{naserasr2013homomorphisms,naserasr2015homomorphisms,NWZ21,raspaud2011circular}.

Despite having similarities with the well studied theory of switchable homomorphisms~\cite{bensmail2021pushable}, the notion of pushable homomorphisms for oriented graphs are not studied extensively. To this end, we obtain some fundamental results and connections to ordinary graph coloring and the theory of signed graphs. 


\medskip

\noindent \textbf{Oriented graphs and homomorphisms:} 
An \textit{oriented graph} $\overrightarrow{G}$ is a directed graph without any loops or parallel arcs in opposite directions. We denote an oriented graph with an upper case Latin alphabet having an arrow over its head, say $\overrightarrow{G}$, while the underlying undirected graph is denoted by $G$.
We denote the set of vertices and arcs of 
$\overrightarrow{G}$ by $V(\overrightarrow{G})$ and $A(\overrightarrow{G})$, respectively. 
Furthermore, given an arc $uv$, the vertex $u$ is an 
\textit{in-neighbor} of $v$, 
and the vertex $v$ is an 
\textit{out-neighbor} of $u$.
The set of all in-neighbors (resp., out-neighbors) of $u$ is denoted by $N^{-}(u)$ (resp., $N^{+}(u)$).

 A \textit{homomorphism} of an oriented graph $\overrightarrow{G}$  to another oriented graph 
    $\overrightarrow{H}$ is a vertex mapping 
    $f: V(\overrightarrow{G}) \to V(\overrightarrow{H})$ such that 
    for any arc $uv$ of $\overrightarrow{G}$, its image $f(u)f(v)$ is also an arc of $\overrightarrow{H}$. 
If $\overrightarrow{G}$ admits a homomorphism to $\overrightarrow{H}$, then we say that $\overrightarrow{G}$ is \textit{$\overrightarrow{H}$-colorable}, and denote it by 
$\overrightarrow{G} \rightarrow \overrightarrow{H}$. 
A bijective homomorphism such that the images of non-adjacent vertices are also non-adjacent is an \textit{isomorphism}. 
The \textit{oriented chromatic number} of an oriented graph $\overrightarrow{G}$, denoted by $\chi_o(\overrightarrow{G})$, is the minimum $|V(\overrightarrow{H})|$ such that $\overrightarrow{G}$ is 
$\overrightarrow{H}$-colorable.

\medskip

\noindent \textbf{Pushable homomorphisms:} To \textit{push} a vertex $v$ of $\overrightarrow{G}$ is to reverse the direction of all arcs of $\overrightarrow{G}$ that are incident to $v$. We denote the so-obtained oriented graph (after pushing $v$ of $\overrightarrow{G}$) by 
$\overrightarrow{G}^{v}$. Observe that, if we push multiple vertices of $\overrightarrow{G}$, then the order in which we push the vertices does not affect the final resultant graph. Moreover, pushing one vertex two times is the same as not pushing it at all. Therefore, it makes sense to define the push operation for a set of vertices as well. 
Given a set $S \subseteq V(\overrightarrow{G})$, to push $S$ 
is to push each vertex of $S$ exactly once. The so-obtained 
graph is denoted by $\overrightarrow{G}^S$
and we say that $\overrightarrow{G}$ 
and $\overrightarrow{G}^S$ are \textit{push equivalent}, 
denoted by $\overrightarrow{G}\equiv_{p}\overrightarrow{G}^S$. 
Moreover, $[\overrightarrow{G}]$ denotes the push equivalent class of oriented graphs, where $\overrightarrow{G}$ is any representative of it.   That means, given a simple graph $G$, the set of all its orientations is partitioned into push equivalent classes. 

A \textit{pushable homomorphism} of an oriented graph $\overrightarrow{G}$  to another oriented graph $\overrightarrow{H}$ is a vertex mapping $f: V(\overrightarrow{G}) \to V(\overrightarrow{H})$ such that there exists  $\overrightarrow{G}'\equiv_{p}\overrightarrow{G}$ so that $f$ is a homomorphism of $\overrightarrow{G}'$ to $\overrightarrow{H}$. If $\overrightarrow{G}$ admits a pushable homomorphism to $\overrightarrow{H}$, then we say that $\overrightarrow{G}$ is \textit{ pushable $\overrightarrow{H}$-colorable}, and we denote it by $\overrightarrow{G} \xrightarrow{p} \overrightarrow{H}$. Moreover, the \textit{pushable chromatic number} of an oriented graph $\overrightarrow{G}$, denoted by $\chi_p(\overrightarrow{G})$ is the smallest $|V(\overrightarrow{H})|$ such that $\overrightarrow{G}$ is pushably $\overrightarrow{H}$-colorable.

\medskip

\noindent \textbf{A historical note:}
The concept of pushable homomorphisms was first introduced and studied by Klostermeyer and MacGillivray~\cite{push} in 2004. Along with their work, Sen~\cite{sen-push} further explored the relation between the pushable chromatic number and the oriented chromatic number. The complexity dichotomy problems regarding finding the pushable chromatic number were studied in~\cite{ochem-push,push}, and the analogue of clique in this set up was studied in~\cite{nandi-bensmail-push1}. Furthermore, the pushable chromatic number of different graph families, such as, graphs with bounded maximum degree~\cite{bensmail2021pushable}, 
graphs with bounded acyclic chromatic number~\cite{sen-push}, subcubic graphs~\cite{bensmail2021pushable}, planar graphs and planar graphs having girth 
restrictions~\cite{borodin1998universal, das2023pushable, push, sen-push}, sparse graphs defined by maximum average degrees~\cite{bensmail2021pushable,borodin1998universal,das2023pushable}, outerplanar graphs and outerplanar graphs having girth restrictions~\cite{push,sen-push}, and grids~\cite{bensmail2023pushable} have been explored across several papers and by different researchers.

Apart from the above-mentioned works on pushable homomorphisms of oriented graphs, the push operation on digraphs has been studied in different contexts. A few notable works include
the works of  Babai and Cameron~\cite{babai2000automorphisms} on automorphism groups of the pushable (switching) classes of tournaments, of Huang, MacGillivray, and Wood~\cite{huang2001pushing} on characterizing multipartite tournaments which can be made acyclic through pushing a set of vertices, and of Klostermeyer and S\v{o}lt\'es~\cite{klostermeyer1998hamiltonicity} who characterized the multipartite tournaments that can be made Hamiltonian through pushing a set of vertices. 

\begin{remark}
    The problem of determining whether an oriented graph $\overrightarrow{G}$ is pushably $\overrightarrow{H}$-colorable or not can be viewed as a graph modification problem~\cite{chung1994chordal,fomin2013subexponential,villanger2009interval} where the actual problem considered here is 
    determining whether $\overrightarrow{G}$ is $\overrightarrow{H}$-colorable, and the modification is  pushing a vertex subset of $\overrightarrow{G}$. 
\end{remark}

\medskip

\noindent \textbf{Motivation, our contributions, and organization:} In the following we list our 
section-wise contributions, and provide some specific motivation for the same. 
\begin{itemize}
    \item In Section \ref{sec2}, we characterize 
    push equivalent orientations of graphs using cycles. This result is an analogue of the 
    well-known Zaslavsky's Lemma~\cite{zaslavsky1982signed} from the theory of signed graphs. 
    Zaslavsky's Lemma is one of the most important 
    structural results in the rich theory of signed graphs and contributes greatly in building the foundation of this theory. 
    Using our main result, we show that the problem of determining whether two given orientations of a graph are push equivalent or not can be solved in polynomial time, to count the number of non-equivalent orientations of a graph, and to provide a canonical definition of pushable homomorphism.

    \item In Section \ref{sec3}, we provide an one-to-one correspondence between oriented and signed graphs which respect the pushable and the switchable homomorphism orders. In terms of category theory, we prove a categorical isomorphism between the categories of bipartite oriented and signed graphs (where morphisms are pushable and switchable homomorphisms, respectively). Moroever, we apply our main theorem 
    to translate a number of important results directly from the theory of signed graphs to oriented graphs. In particular, we show that pushable homomorphisms of bipartite graphs capture the entire theory of graph coloring as a subcase.

    \item In Section \ref{sec4}, we prove a connection between the graph coloring problem and pushable homomorphism to directed odd cycles. Note that, a complete complexity dichotomy characterization of 
    the decision problem  of determining whether an input oriented graphs 
    $\overrightarrow{G}$ admits a pushable $\overrightarrow{H}$-coloring or not is known due to Klostermeyer and MacGillivray~\cite{push}. We apply our main result to show that the pushable $\overrightarrow{H}$-coloring problem, where $\overrightarrow{H}$ is a directed odd cycle, is NP-complete even for sparse graphs. Also we show how the pushable $\overrightarrow{H}$-coloring problem completely encodes graph coloring.

    \item In Section \ref{sec6}, we share our concluding remarks, open problems, and future research directions. 
\end{itemize}

\medskip

\noindent \textit{Note:} In this article, we follow the book ``Introduction to graph theory'' by West~\cite{D.B.West} for the standard graph theoretic notation and terminology.

\section{Characterization of push equivalent orientations}\label{sec2}
Given a graph $G$, an \textit{ordered closed walk} $C = v_1v_2 \cdots v_kv_1$ is a closed walk together with the prescribed traversal rule which mandates us to traverse the vertex $v_{i+1}$ after $v_{i}$, where $i \in \{1, 2, \cdots, k\}$ and the $+$ operation in the subscript of vertex names is taken modulo $k$.  
Thus, it will make sense to speak about 
\textit{forward arcs} and \textit{backward arcs} of $C$ in this context. 
Notice that, given a closed walk $C = v_1v_2 \cdots v_kv_1$, there can be two distinct ordered closed walks on $C$, that is, (i) the ordered closed walk $v_1v_2 \cdots v_kv_1$  and (ii) the ordered closed walk
$v_1v_kv_{k-1} \cdots v_2v_1$.   The two ordered closed walks are called each other's \textit{conjugates}. 
In particular, a cycle is also a closed walk, thus the above definitions hold for cycles as well. 

An oriented ordered  cycle $\overrightarrow{C}$ 
is \textit{directable} if it is possible to push some vertices of $\overrightarrow{C}$ and make it a directed cycle. 
In particular, if it is possible to push some  vertices of $\overrightarrow{C}$ and make it a directed cycle with all forward (resp., backward) arcs, then $\overrightarrow{C}$ is 
\textit{forward (resp., backward) directable}. Finally, if $\overrightarrow{C}$ is not directable, then it is \textit{non-directable}. 
Based on these definitions, oriented ordered cycles, and by an extension, oriented ordered closed walks can be classified into four types. 
\begin{enumerate}[(i)]
    \item \textit{Odd forward directable closed walk:} An oriented ordered closed walk of odd length that has an odd number of forward arcs, or equivalently, an even number of backward arcs.
    In particular, if the oriented ordered closed walk is an oriented ordered cycle, then the cycle is forward directable.

    \item \textit{Odd backward directable closed walk:} An oriented ordered closed walk of odd length that has an even number of forward arcs, or equivalently, an odd number of backward arcs. In particular, if the oriented ordered closed walk is an oriented ordered cycle, then the cycle is backward directable.

    \item \textit{Even directable closed walk:} An oriented ordered closed walk of even length that has an even number of forward arcs, or equivalently, an even number of backward arcs.
    In particular, if the oriented ordered closed walk is an oriented ordered cycle, then the cycle is directable (both forward and backward).

    \item \textit{Even non-directable closed walk:} An oriented ordered closed walk of even length that has an odd number of forward arcs, or equivalently, an odd number of backward arcs.
    In particular, if the oriented ordered closed walk is an oriented ordered cycle, then the cycle is non-directable. 
\end{enumerate}
If two oriented ordered closed walks are of the same type, then their \textit{directability} is the same while it is different otherwise.

\begin{theorem}\label{th_zas}
    Let $\overrightarrow{G}^{{\tiny 1}}$ and 
    $\overrightarrow{G}^{{\tiny 2}}$ be two orientations of the graph $G$. The two orientations of $G$ are push equivalent if and only if every ordered cycle of $G$ has the same directability in both $\overrightarrow{G}^{{\tiny 1}}$
    and $\overrightarrow{G}^{{\tiny 2}}$. 
\end{theorem}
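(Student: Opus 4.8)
The forward implication is essentially Observation~\ref{obs:evenoddinv}. If $\overrightarrow{G}^{2} = (\overrightarrow{G}^{1})^{S}$ for some $S \subseteq V(G)$, then for any ordered cycle $C$ of $G$, passing from $\overrightarrow{G}^{1}$ to $\overrightarrow{G}^{2}$ amounts to pushing the vertices of $S$ one at a time, and by Observation~\ref{obs:evenoddinv} each such push preserves the parity of the number of forward arcs of $C$. Since $C$ has a fixed length in $G$, the classification into the four types (odd/even, forward/backward/ (non-)directable) recorded after the even-cycle propositions shows that this parity, together with the length, determines the directability type of $C$; hence $C$ has the same directability in $\overrightarrow{G}^{1}$ and $\overrightarrow{G}^{2}$. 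I would state this equivalence ``same directability type $\iff$ same parity of forward arcs'' as a one-line remark first, since it is used in both directions.

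For the converse, assume every ordered cycle of $G$ has the same directability in $\overrightarrow{G}^{1}$ and $\overrightarrow{G}^{2}$. It suffices to treat each connected component separately, so assume $G$ is connected, and fix a spanning tree $T$ of $G$. By Proposition~\ref{eui_tree} (applied to $T$ with the orientations induced by $\overrightarrow{G}^{1}$ and $\overrightarrow{G}^{2}$), there is a set $S \subseteq V(G)$ such that $(\overrightarrow{G}^{1})^{S}$ and $\overrightarrow{G}^{2}$ induce the same orientation on every edge of $T$. Write $\overrightarrow{H} := (\overrightarrow{G}^{1})^{S}$; I claim $\overrightarrow{H} = \overrightarrow{G}^{2}$, which immediately gives $\overrightarrow{G}^{1} \equiv_{p} \overrightarrow{G}^{2}$. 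Let $e = uv$ be any edge of $G$ not in $T$, and let $C_{e}$ be the fundamental cycle of $e$ in $T$, equipped with a fixed traversal. All arcs of $C_{e}$ other than the one on $e$ lie in $T$, hence agree in $\overrightarrow{H}$ and $\overrightarrow{G}^{2}$; in particular they contribute the same number of forward arcs to $C_{e}$ in both. On the other hand, $\overrightarrow{H} \equiv_{p} \overrightarrow{G}^{1}$, so by the forward direction $C_{e}$ has the same directability in $\overrightarrow{H}$ as in $\overrightarrow{G}^{1}$, hence as in $\overrightarrow{G}^{2}$ by hypothesis; by the remark above, the total parity of forward arcs of $C_{e}$ is therefore equal in $\overrightarrow{H}$ and $\overrightarrow{G}^{2}$. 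Comparing with the previous sentence, the arc on $e$ must be forward in $\overrightarrow{H}$ exactly when it is forward in $\overrightarrow{G}^{2}$, i.e. $e$ receives the same orientation in both. Since $e$ was an arbitrary non-tree edge and the tree edges already agree, $\overrightarrow{H} = \overrightarrow{G}^{2}$.

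The only real obstacle is the bookkeeping in the last step: keeping the traversal of $C_{e}$ fixed so that ``forward arc'' has the same meaning for both $\overrightarrow{H}$ and $\overrightarrow{G}^{2}$, and making sure the translation between the directability type and the parity of forward arcs is stated once in full generality (covering all four cases, where the cycle length is the same in $G$ so only the parity varies). Everything else is a routine combination of Proposition~\ref{eui_tree}, Observation~\ref{obs:evenoddinv}, and the fundamental-cycle structure of a spanning tree; the count of push-equivalent orientations alluded to in the introduction then follows since the push class is determined by the directability pattern on a cycle basis, which has $|E(G)| - |V(G)| + c$ elements for a graph with $c$ components.
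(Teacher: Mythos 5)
Your proposal is correct and follows essentially the same route as the paper: Observation~\ref{obs:evenoddinv} for necessity, and for sufficiency a spanning tree aligned via Proposition~\ref{eui_tree} followed by a fundamental-cycle/parity argument to force agreement on each non-tree edge. The extra care you take in spelling out the ``directability type $\iff$ parity of forward arcs (at fixed length)'' dictionary and in fixing the traversal of each fundamental cycle is a welcome tightening of the paper's argument, not a deviation from it.
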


\begin{proof}
It is enough to prove the statement when $G$ is a connected graph. 

Note that, pushing the vertices of an oriented ordered cycle does not change the parity of the number of its forward (resp., backward) arcs. Thus, if $\overrightarrow{G}^{{\tiny 1}}$ and $\overrightarrow{G}^{{\tiny 2}}$ 
are push equivalent, any ordered cycle of $G$ has the same directability in $\overrightarrow{G}^{{\tiny 1}}$ and $\overrightarrow{G}^{{\tiny 2}}$. This proves the ``only if'' part of the statement.

To prove the ``if'' part, let us assume that any ordered cycle of $G$ has the same directability in 
$\overrightarrow{G}^{{\tiny 1}}$ and $\overrightarrow{G}^{{\tiny 2}}$. 
Take a spanning tree $T$ of $G$. 
Let $\overrightarrow{T}^{{\tiny 1}}$ and $\overrightarrow{T}^{{\tiny 2}}$ 
be the induced orientations of $T$ under $\overrightarrow{G}^{{\tiny 1}}$ and 
$\overrightarrow{G}^{{\tiny 2}}$, respectively. Suppose that $uv$ is an arc of 
 $\overrightarrow{T}^{{\tiny 1}}$ while $vu$ is an arc of 
 $\overrightarrow{T}^{{\tiny 2}}$. Notice that, $uv$ is a cut edge of $T$ (since $T$ is a tree). Let $A$ and $B$ be the sets of vertices of the two different connected components of $T - uv$, respectively. Observe that, if we push all vertices of $A$ in 
 $\overrightarrow{T}^{{\tiny 2}}$, then the arc $vu$ changes its direction while all the other arcs of 
 $\overrightarrow{T}^{{\tiny 2}}$ retain their direction. Thus, we can repeat this process for all edges of $T$ where the direction of their respective arcs in $\overrightarrow{T}^{{\tiny 1}}$ and $\overrightarrow{T}^{{\tiny 2}}$ are different. 
 As a result, we will obtain a push equivalent orientation 
 $\overrightarrow{G}^{{\tiny 3}}$ of $\overrightarrow{G}^{{\tiny 2}}$ in which the tree $T$ has exactly the orientation  
 $\overrightarrow{T}^{{\tiny 1}}$. As the directability of an oriented ordered cycle is invariant under pushing, the directability of all cycles of $\overrightarrow{G}^{{\tiny 1}}$
and 
$\overrightarrow{G}^{{\tiny 3}}$ must also be the same. As any edge $u'v'$ from $E(G) \setminus E(T)$ is part of a cycle in $G$ (since $T$ is a spanning tree), the arcs corresponding to the edge $u'v'$ in 
$\overrightarrow{G}^{{\tiny 1}}$
and 
$\overrightarrow{G}^{{\tiny 3}}$ must have the same direction. Therefore, $\overrightarrow{G}^{{\tiny 1}}$
and 
$\overrightarrow{G}^{{\tiny 3}}$ must be the same orientation of $G$ which implies that $\overrightarrow{G}^{{\tiny 1}}$
is push equivalent to 
$\overrightarrow{G}^{{\tiny 2}}$. 
\end{proof}

\begin{remark}
   In the classification of ordered closed walks presented before the statement of Theorem~\ref{th_zas}, in each item, the last line is a remark that maybe considered as an observation. These observations also follow directly from Theorem~\ref{th_zas}. 
\end{remark}

 The general notion of homomorphism (not necessarily of graphs) usually preserves some important property of the object for which it is defined. In the case of pushable homomorphisms, it preserves the directability of ordered cycles according to the above theorem. However, from the  given definition, it is not so easy to realise which properties get preserved.  
 Hence, the following alternative definition can be regarded 
 as the canonical definition of pushable homomorphism.   

\begin{definition}[The canonical definition]
    A vertex mapping $f: V(\overrightarrow{G}) \to V(\overrightarrow{H})$ is a pushable homomorphism of $\overrightarrow{G}$ to $\overrightarrow{H}$ if
    $f$ preserves the directability of each ordered closed walk of $\overrightarrow{G}$. 
\end{definition}

Apart from motivating and justifying the canonical definition of a pushable homomorphism, the main result of this section, that is, Theorem~\ref{th_zas}, 
has a number of other interesting consequences. 
An immediate corollary is the following.

\begin{corollary}
\label{cor:forest}
Any two orientations of a forest $F$ are push equivalent.  
\end{corollary}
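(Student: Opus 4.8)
The plan is to reduce the statement to the tree case of Proposition~\ref{eui_tree}, using the fact that the push operation acts independently on the connected components of a graph. Write $F$ as the disjoint union of its components $T_1, T_2, \ldots, T_k$, each of which is a tree by definition of a forest, and let $\overrightarrow{F}$ and $\overrightarrow{F}'$ be any two orientations of $F$. For each index $i$, let $\overrightarrow{T_i}$ and $\overrightarrow{T_i}'$ denote the orientations of the tree $T_i$ induced by $\overrightarrow{F}$ and $\overrightarrow{F}'$, respectively.

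First I would apply Proposition~\ref{eui_tree} to each component separately: for every $i$ there exists a set $S_i \subseteq V(T_i)$ such that $(\overrightarrow{T_i})^{S_i} = \overrightarrow{T_i}'$. Next I would set $S = \bigcup_{i=1}^{k} S_i \subseteq V(F)$ and argue that $(\overrightarrow{F})^{S} = \overrightarrow{F}'$. To see this, observe that since $F$ has no edge joining two distinct components, every edge of $F$ is contained in a unique $T_i$; moreover, for such an edge, both its endpoints lie in $S$ exactly when they both lie in $S_i$, and both lie outside $S$ exactly when they both lie outside $S_i$. Hence the edge cut $[S, V(F)\setminus S]$ decomposes as the disjoint union of the cuts $[S_i, V(T_i)\setminus S_i]$ over $i$. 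By Proposition~\ref{prop_edge_cut}, pushing $S$ in $\overrightarrow{F}$ reverses exactly the arcs of $[S, V(F)\setminus S]$, which on each component $T_i$ is precisely the effect of pushing $S_i$ in $\overrightarrow{T_i}$; this produces $\overrightarrow{T_i}'$ on every component, hence $\overrightarrow{F}'$ overall. Consequently $\overrightarrow{F} \equiv_{p} \overrightarrow{F}'$.

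I do not expect any genuine difficulty here: the entire content is the observation that pushing respects the component decomposition, which is immediate from Proposition~\ref{prop_edge_cut} together with the absence of cross-component edges. This is exactly why the statement is an immediate corollary of Proposition~\ref{eui_tree}.
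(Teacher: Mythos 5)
Your proof is correct and follows essentially the same route as the paper: decompose $F$ into its tree components, apply Proposition~\ref{eui_tree} to each, and combine. The only difference is that you spell out explicitly (via the union of the pushed sets $S_i$ and the decomposition of the edge cut) why the component-wise equivalences assemble into a push equivalence of the whole forest, a step the paper leaves implicit.
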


\begin{proof}
As a forest $F$ does not have any cycle, 
according to Theorem~\ref{th_zas}, 
any two orientations of $F$ must be push equivalent. 
\end{proof}

Moreover, Theorem~\ref{th_zas} implies a polynomial algorithm 
for deciding push equivalence between two orientations of a graph.

\begin{proposition}
    Let $\overrightarrow{G}^{{\tiny 1}}$ and 
    $\overrightarrow{G}^{{\tiny 2}}$ be two orientations of the graph $G$.
    There is a polynomial time algorithm to determine whether 
    $\overrightarrow{G}^{{\tiny 1}}$ is push equivalent to 
    $\overrightarrow{G}^{{\tiny 2}}$. 
\end{proposition}

\begin{proof}
    Note that there are well-known~\cite{D.B.West} linear time algorithms (for example, Kruskal's algorithm)  for finding a spanning tree 
    of a graph. 
    Therefore,     we can apply the procedure used in the proof of Theorem~\ref{th_zas}
    to devise a polynomial algorithm. 
\end{proof}

Furthermore, we can count the number of distinct orientations up to push equivalence of a graph. 

\begin{proposition}
    Let $G$ be a graph with $n$ vertices, $m$ edges, and $c$ components. Then the number of 
    orientations of $G$, that are not push equivalent to each other, is $2^{m-n+c}$. 
\end{proposition}

\begin{proof}
    Let $F$ be a spanning forest of $G$. As any two orientations of $F$ are push equivalent due to Corollary~\ref{cor:forest}, the distinct orientations of $G$ will be determined by the orientations of the edges that do not belong to $F$. We know that a forest on $n$ vertices having $c$ components have exactly $(n-c)$ edges. Thus, there will be $2^{m-n+c}$ distinct choices for orienting the edges of $G$ that do not belong to $F$. 
\end{proof}

Notice that the above count is for a labelled graph $G$ and does not take isomorphism into account. 
In contrast, next, we are going to list the oriented graphs that remain invariant (up to isomorphism) under the push operation. 

\begin{theorem}
    Let $\overrightarrow{G}$ be a connected oriented graph that is isomorphic to each of its push equivalent oriented graphs. 
    Then $\overrightarrow{G}$ is either
    the one vertex (oriented) graph $\overrightarrow{K}_1$, the orientation $\overrightarrow{K}_2$ 
    of a complete graph on two vertices, or 
    the non-directable $4$-cycle.  
    \end{theorem}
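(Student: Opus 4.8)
The plan is to freeze two isomorphism invariants, use them to force $G$ to be tiny, and then read off the orientations. That $\overrightarrow{K}_1$, $\overrightarrow{K}_2$ and the non-directable $4$-cycle satisfy the hypothesis is a direct check: $\overrightarrow{K}_1$ has no arcs; pushing an endpoint of $\overrightarrow{K}_2$ only reverses its single arc; and by Theorem~\ref{th_zas} the push class of the non-directable $4$-cycle is the set of non-directable orientations of $C_4$, all of which are isomorphic to the orientation with an adjacent source and sink together with two vertices of in-degree and out-degree $1$. For the converse I would use that the multiset $\Phi(\overrightarrow{H})=\{(d^+(x),d^-(x)):x\in V(\overrightarrow{H})\}$ is an isomorphism invariant. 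Since every orientation push-equivalent to $\overrightarrow{G}$ is isomorphic to $\overrightarrow{G}$, pushing any single vertex $v$ leaves $\Phi$ unchanged; and as pushing $v$ alters $(d^+,d^-)$ only at $v$ and at the vertices of $N^+(v)\cup N^-(v)$, this is a \emph{local} multiset identity on $\{v\}\cup N^+(v)\cup N^-(v)$. Likewise the scalar $\sum_x(d^+(x)-d^-(x))^2$ is isomorphism-invariant; demanding that pushing $v$ not change it gives, after a short expansion, $\sum_{u\in N^+(v)}(d^+(u)-d^-(u))-\sum_{u\in N^-(v)}(d^+(u)-d^-(u))=-\deg(v)$ for every $v$, and summing over all $v$ telescopes to the global identity $\sum_x(d^+(x)-d^-(x))^2=2\,|A(\overrightarrow{G})|$.

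\textbf{Reductions.} Next I would establish two structural facts. (1) If $\overrightarrow{G}$ has a vertex $v$ of degree $1$, then $G=K_2$: pushing $v$ reverses only the single arc at $v$, so $\Phi$-invariance becomes a two-element multiset identity between the pair of $v$ and the pair $(p,q)$ of its unique neighbour $w$, and the only way to match it forces $\deg(w)=p+q=1$, whence $G=K_2$ by connectedness. (2) If $\overrightarrow{G}$ has an arc, it has a source or a sink: otherwise every vertex has $d^+\ge 1$, and since minimum out-degree is an isomorphism invariant while pushing $v$ decreases the out-degree of each in-neighbour of $v$ by exactly $1$, an induction on $k$ shows every vertex has out-degree $\ge k$ for all $k$ — absurd in a finite graph. (Both facts also return $\overrightarrow{K}_1,\overrightarrow{K}_2$ when $|V(\overrightarrow{G})|\le 2$.)

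\textbf{The case $\Delta(G)\le 2$.} Suppose henceforth $|V(\overrightarrow{G})|\ge 3$, so by (1) $G$ has no vertex of degree $1$; if moreover $\Delta(G)\le 2$ then $G$ is connected and $2$-regular, hence a cycle $C_n$ with $n\ge 3$, which has exactly two push classes. For $n$ odd both push classes contain a directed cycle; for $n$ even the directable class contains the directed cycle $\overrightarrow{C}_n$; but a directed cycle has neither a source nor a sink, contradicting (2). For $n$ even, the non-directable push class contains the orientation $\overrightarrow{H}$ obtained from the directed path $v_1\to\cdots\to v_n$ by adjoining the arc $v_1\to v_n$ (non-directable, having the odd number $n-1$ of forward arcs along $v_1v_2\cdots v_nv_1$), and $\sum_x(d^+(x)-d^-(x))^2=8$ for $\overrightarrow{H}$, so the global identity forces $2n=8$. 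Together with the opening verification for $C_4$, this leaves exactly the non-directable $4$-cycle in this regime.

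\textbf{The main obstacle: $\Delta(G)\ge 3$.} This is where I expect the genuine work. Using (2) one may assume $\overrightarrow{G}$ has a source $v$ (sinks being symmetric). The local $\Phi$-identity at a source of maximum degree forces its neighbourhood to realise, each exactly once, the pair-types $(\deg v,0),(\deg v-1,1),\dots,(0,\deg v)$ — so in particular $v$ is adjacent to a sink of the same degree — and iterating the local identity at these neighbours produces a rigidly constrained cascade of neighbourhoods. The catch, and the reason the degree-pair multiset by itself is insufficient, is that some $\Delta=3$ neighbourhood patterns are $\Phi$-consistent; so one must either drive the cascade until it forces a directed cycle all of whose vertices share one pair-type, at which point the local $\Phi$-identity at a vertex of that cycle provably fails, or strengthen $\Phi$ to the arc-type multiset $\{(\mathrm{type}(u),\mathrm{type}(w)):uw\in A(\overrightarrow{G})\}$ (again an isomorphism invariant preserved by every push) and rerun the neighbourhood analysis one level finer. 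I anticipate the bookkeeping in this last step to be the bulk of the proof.
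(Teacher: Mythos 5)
The parts you complete are correct and take a genuinely different route from the paper: your use of the isomorphism invariants $\Phi$ (the multiset of degree pairs) and $\sum_x(d^+(x)-d^-(x))^2$, together with the observation that pushing a single vertex must preserve them, cleanly disposes of pendant vertices, forces the existence of a source or sink, and pins down the cycle case (the global identity $\sum_x(d^+(x)-d^-(x))^2=2|A(\overrightarrow{G})|$ eliminating all even cycles except $C_4$ is a nice touch). The paper instead argues by cases on the diameter of $G$ and, in each case, explicitly constructs two push-equivalent orientations with different numbers of sources or sinks (e.g.\ one with no source versus one with a source, or one with at most one source versus one with two sources at distance three, or a cut vertex made a source versus a sink), finishing with a $K_{2,3}$ obstruction for the residual $2$-connected bipartite diameter-$2$ case.

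However, there is a genuine gap: the case $\Delta(G)\ge 3$ --- which is where almost all graphs live --- is not proved. You explicitly defer it, and the strategy you sketch is not shown to close. Your own example of a $\Phi$-consistent neighbourhood pattern at a degree-$3$ source shows that the local multiset identity alone does not yield a contradiction, and neither the ``cascade until a directed cycle of constant pair-type appears'' plan nor the refinement to the arc-type multiset is carried out or shown to terminate; it is not even clear that any finite list of push-invariant numerical/multiset invariants suffices, since the hypothesis only gives isomorphism of the pushed graphs, not equality of any particular invariant you have not yet verified is strong enough. To repair this you would need either to actually run one of those arguments to completion, or to switch to something like the paper's method: under the hypothesis, every push-equivalent orientation has the same number of sources, so it is enough to exhibit two pushes of $\overrightarrow{G}$ producing different source counts, which the paper does via a spanning-tree orientation with a unique source on one hand and two far-apart sources (or a sourceless orientation built around a directable cycle) on the other.
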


\begin{proof}
Let $\overrightarrow{G}$ be an oriented graph such that $|V(\overrightarrow{G})| \leq 3$. It is trivial to see that $\overrightarrow{K}_1$ and $\overrightarrow{K}_2$ are isomorphic to their push equivalent oriented graphs. Furthermore, there are no such oriented graphs on three vertices since the transitive triangle and directed triangle are push equivalent but not isomorphic. 

Observe that the non-directable $4$-cycle is isomorphic to all its push equivalent oriented graphs. 
Let $\overrightarrow{G}$ be a connected oriented graph with 
$|V(\overrightarrow{G})| \geq 4$
which is not equal to the non-directable $4$-cycle. 
It is enough to find two non-isomorphic push equivalent orientations of $\overrightarrow{G}$.

If $G$ has diameter $1$, that is, if $G$ is a complete graph, then push all in-neighbors of a particular vertex $v$ of $\overrightarrow{G}$ to obtain its push equivalent oriented graph $\overrightarrow{G}^{{\tiny 1}}$. Notice that,  $\overrightarrow{G}^{{\tiny 1}}$ has a source, and no sink. Next, push all out-neighbors of  $v$ of $\overrightarrow{G}$ to obtain its push equivalent oriented graph $\overrightarrow{G}^{{\tiny 2}}$. Note that, $\overrightarrow{G}^{{\tiny 2}}$ has a sink, and no source. Hence, $\overrightarrow{G}^{{\tiny 1}}$ and $\overrightarrow{G}^{{\tiny 2}}$ are two  
non-isomorphic push equivalent orientations of $\overrightarrow{G}$.

If $G$ has diameter at least $3$, then we can take a spanning tree $T$ of $G$ and push some vertices of $\overrightarrow{G}$ so that the orientation of $T$ has a single source $v$ (say). This is possible due to Corollary~\ref{cor:forest}. Let us denote this push equivalent orientation of $\overrightarrow{G}$ by $\overrightarrow{G}^{{\tiny 1}}$.
Observe that, $\overrightarrow{G}^{{\tiny 1}}$ has at most one source in it.
On the other hand, choose two vertices $v_1$ and $v_2$ of $G$ that are at a distance equal to $3$. Then push the in-neighbors of $v_1$ and $v_2$ to obtain the 
push equivalent oriented graph $\overrightarrow{G}^{{\tiny 2}}$ of $\overrightarrow{G}$.
Notice that, both $v_1$ and $v_2$ are sources in 
$\overrightarrow{G}^{{\tiny 2}}$. In particular, that means $\overrightarrow{G}^{{\tiny 2}}$ has at least two sources, and thus cannot be isomorphic to 
$\overrightarrow{G}^{{\tiny 1}}$.

If $G$ has diameter $2$, then it can have at most one cut vertex.
If $G$ has a cut vertex $v$, then we can push all in-neighbors of $v$ in $\overrightarrow{G}$ to obtain its push equivalent oriented graph $\overrightarrow{G}^{{\tiny 1}}$. On the other hand, we can push 
all out-neighbors of $v$ in $\overrightarrow{G}$ to obtain its push equivalent oriented graph $\overrightarrow{G}^{{\tiny 2}}$.
Note that, the cut vertex $v$ is a source in 
$\overrightarrow{G}^{{\tiny 1}}$ and a sink in 
$\overrightarrow{G}^{{\tiny 2}}$. Thus, $\overrightarrow{G}^{{\tiny 1}}$ cannot be isomorphic to 
$\overrightarrow{G}^{{\tiny 2}}$ as $v$ must map to $v$ under any isomorphism.

Hence, suppose that $G$ has diameter $2$
and is
$2$-connected. 
Further, if $G$  has a 
directable cycle $C$, 
then consider the graph 
$G' = G[V(G) \setminus V(C)]$ induced by the vertices which are not part of $C$. Take a spanning forest $F$ of $G'$, and assume that the components of $F$ are $T_1, T_2, \cdots, T_r$. As $G$ is connected and has diameter $2$, there is a vertex $v_i$ in $T_i$ which is adjacent to a vertex of $C$, for each $i \in \{1,2,\cdots, r\}$.  Now push the vertices of $\overrightarrow{G}$ to obtain an orientation $\overrightarrow{G}^{{\tiny 1}}$ where $C$ is oriented as a directed cycle,
$T_i$ is oriented as a 
directed tree with a single source $v_i$ (source inside the tree) for all $i \in \{1, 2, \cdots, r\}$, and $v_i$ has an in-neighbor from $V(C)$.  
Note that, $\overrightarrow{G}^{{\tiny 1}}$ does not have any source.  On the other hand, pick any vertex $v$ of $\overrightarrow{G}$ and push all its in-neighbors to obtain the push equivalent orientation  $\overrightarrow{G}^{{\tiny 2}}$ with $v$ as a source. As $\overrightarrow{G}^{{\tiny 1}}$ does not have any source, and $\overrightarrow{G}^{{\tiny 2}}$ has at least one source $v$, they cannot be isomorphic.

Therefore, as all odd cycles are directable (forward or backward), 
the only case that we are yet to consider is when $G$ is a $2$-connected bipartite graph of diameter $2$ whose cycles are all non-directable. Therefore, $G$ must be a complete bipartite graph $K_{m,n}$, where $n \geq 3$ and $m \geq 2$. 
However, it is not possible to obtain an orientation of 
$K_{2,3}$ so that all its cycles are non-directable. Thus, such a graph $\overrightarrow{G}$ cannot exist. This completes the proof. 
\end{proof}

\section{Connection with signed graphs}\label{sec3}
In the following, we establish a connection between pushable homomorphisms of oriented graphs and switchable homomorphisms of signed graphs. 
The theory of signed graphs has been well studied~\cite{beck2006number, harary, naserasr2022density, naserasr2015homomorphisms, naserasr2021homomorphisms, zaslavsky1982signed} and it is deemed important due to its strong connections with graph coloring, graph minor theory, Ramsey Theory, flows, etc. However, let us recall some basics of signed graphs to keep our paper self contained.

\medskip

\noindent \textbf{Preliminaries of signed graphs:} 
A \textit{signed graph} $(G, \sigma)$ is a graph $G$ along with a \textit{signature function} $\sigma: E(G) \to \{+, -\}$ that assigns a positive or a negative sign to the edges of $G$. 
A \textit{sign-preserving homomorphism} of a 
signed graph $(G, \sigma)$ to  another signed graph 
    $(H, \pi)$  is a vertex mapping 
    $f: V(G) \to V(H)$ such that 
    for any edge $uv$ of $(G, \sigma)$, its image $f(u)f(v)$ is also an edge of $(H, \pi)$ satisfying $\sigma(uv) = \pi(f(u)f(v))$.

    To \textit{switch} a vertex $v$ of $(G, \sigma)$ is to 
    change the sign of all the edges incident to $v$. 
    Given a set $S \subseteq V(G)$, to switch $S$ 
is to push each vertex of $S$ exactly once. 
The so-obtained signed
graph is denoted by $(G, \sigma^S)$
and we say that $(G, \sigma)$ 
and $(G, \sigma^S)$ are \textit{switch equivalent}, 
denoted by $(G, \sigma) \equiv_{s} (G, \sigma^S)$. 
Moreover, $[(G, \sigma)]$ denotes the switch equivalent class of signed graphs, where $(G, \sigma)$ is any representative of it.   That means, given a simple graph $G$, the set of all of its signatures is partitioned into switch equivalent classes.

    A \textit{switchable homomorphism} of a 
    signed graph $(G, \sigma)$  to another signed graph 
    $(H, \pi)$ is a vertex mapping 
    $f: V(G) \to V(H)$ such that 
    there exists a switch equivalent graph $(G, \sigma^S)$ of 
    $(G, \sigma)$ so that $f$ is a sign-preserving homomorphism of 
    $(G, \sigma^S)$ to $(H, \pi)$. 
    If $(G, \sigma)$ admits a 
    switchable homomorphism to $(H, \pi)$, then we 
    denote it by $(G, \sigma) \xrightarrow{s} (H, \pi)$.
    Moreover, the \textit{switchable chromatic number} of a
    signed graph $(G, \sigma)$, denoted by $\chi_s((G, \sigma))$ is the smallest $|V(H)|$ such that $(G, \sigma) \xrightarrow{s} (H, \pi)$.

\medskip

Let $\mathcal{C}_p$ (resp.,  $\mathcal{C}_s$) denote the set of all 
push (resp., switch) equivalence classes of oriented (resp., signed) bipartite graphs. 
We say $f: V(G) \to V(H)$ is a \textit{morphism} of $[\overrightarrow{G}]$ to $[\overrightarrow{H}]$  
(resp., of $[(G, \sigma)]$ to  $[(H, \pi)]$) 
if $f: \overrightarrow{G} \xrightarrow{p} \overrightarrow{H}$
(resp., $f: (G, \sigma) \xrightarrow{s} (H, \pi)$) is a pushable (resp., switchable) homomorphism.

\begin{theorem}\label{Th 1-2-1}
There exists a bijection 
$\Phi: \mathcal{C}_p \to \mathcal{C}_s$ such that $f$ is a morphism of 
 $[\overrightarrow{G}]$ to $[\overrightarrow{H}]$ 
 if and only if 
 $f$ is a morphism of 
 $\Phi([\overrightarrow{G}]) = [(G, \sigma)]$ to  $\Phi([\overrightarrow{H}]) = [(H, \pi)]$. 
\end{theorem}

\begin{proof}
    Firstly, we describe the function $\Phi$ through a construction. Let $\overrightarrow{G}$ be a bipartite oriented graph with 
    partite sets $A$  and $B$ (in this order). Thus, all arcs of $\overrightarrow{G}$ 
    have exactly one endpoint in $A$ and the other in $B$. Now we construct a signed graph 
    $(G, \sigma_{(\overrightarrow{G},A,B)})$ in the following manner: 
    we replace each arc from $A$ to $B$ with a positive edge and each arc from $B$ to $A$ with a negative edge. Now we set 
    $\Phi([\overrightarrow{G}]) = [(G, \sigma_{(\overrightarrow{G},A,B)})]$. 
    Notice that, the choice of $A$ and $B$ is not unique unless the graph $G$ is connected. Also, even if $G$ is connected, swapping the 
    order of the partite sets $A$ and $B$ would produce a different signed graph 
    $(G, \sigma_{(\overrightarrow{G},B,A)})$. Therefore, it is important to prove that $\Phi$ is well-defined.

Let $G$ be a bipartite graph with partite sets $A, B$ (resp., $A',B'$). Note that, for showing that 
$\Phi$ is well-defined it is enough to prove 
$(G, \sigma_{(\overrightarrow{G},A,B)})$
and 
$(G, \sigma_{(\overrightarrow{G}^S,A',B')})$ 
are switch equivalent, where $S$ is a vertex subset of $G$.  To do so, we will find a way to switch
a set of vertices of 
$(G, \sigma_{(\overrightarrow{G},A,B)})$ to obtain 
$(G, \sigma_{(\overrightarrow{G}^S,A',B')})$. 

As a connected bipartite graph has a unique bipartition, for any connected component of $G$,
either $A \cap V(C) = A' \cap V(C)$ and 
$B \cap V(C) = B' \cap V(C)$ or 
$A \cap V(C) = B' \cap V(C)$ and 
$B \cap V(C) = A' \cap V(C)$. 
Let 
$A \cap A' = A_1$, 
$B \cap B' = B_1$,
$A \cap B' = A_2$, 
and 
$B \cap A' = B_2$. 
Thus, 
it is possible to express $G$ as a disjoint union of $G_1$ and $G_2$ such that $A_1, B_1$ are partite sets of $G_1$ and $A_2$, $B_2$ are partite sets of $G_2$.  Therefore, if we switch the vertices of $A_2$ in $(G, \sigma_{(\overrightarrow{G},A,B)})$, we will obtain the graph 
$(G, \sigma_{(\overrightarrow{G},A',B')})$. 
Next observe that, it is possible to obtain the graph 
$(G, \sigma_{(\overrightarrow{G}^S,A',B')})$ from 
$(G, \sigma_{(\overrightarrow{G},A',B')})$ by simply switching the set $S$ of vertices. 
This proves that $\Phi$ is well-defined.

Next we will show that the function $\Phi$ is injective. Suppose that $\Phi([\overrightarrow{G}]) = \Phi([\overrightarrow{G}'])$. To show $\Phi$ is injective, we need to show that $\overrightarrow{G}'$ is push equivalent to $\overrightarrow{G}$. 
First of all, note that $\overrightarrow{G}$ and $\overrightarrow{G}'$ must have the same underlying graph $G$ for $\Phi([\overrightarrow{G}]) = \Phi([\overrightarrow{G}'])$ to hold. Let $A, B$ be partite sets of $G$. 
Thus, in particular we know that 
$(G, \sigma_{(\overrightarrow{G}, A, B)})$  and 
$(G, \sigma_{(\overrightarrow{G}', A, B)})$ are switch equivalent. 
Suppose 
that 
$(G, \sigma_{(\overrightarrow{G}', A, B)})$ can be obtained from 
$(G, \sigma_{(\overrightarrow{G}, A, B)})$ by switching the set $S$ of vertices. Thus, if we switch the vertices of $S$ in $\overrightarrow{G}$, we will obtain the oriented graph 
$\overrightarrow{G}'$, proving 
$\overrightarrow{G} \equiv_p \overrightarrow{G}'$. Hence, $\Phi$ is indeed injective.

To prove that $\Phi$ is a bijection, we are left to show that 
$\Phi$ is a surjection. Let $[(G,\sigma)]$ be a switch equivalence class of signed bipartite graphs. Moreover, let $A, B$ be partite sets of $G$. 
Let us describe a construction of an oriented graph based on $(G, \sigma)$. 
The oriented graph $\overrightarrow{G}$ is obtained by replacing all the positive edges with arcs from $A$ to $B$ and all the negative edges with arcs from $B$ to 
$A$. Notice that, we will have $(G, \sigma_{(\overrightarrow{G}, A, B)}) = (G, \sigma)$. That means, we have found an oriented graph $\overrightarrow{G}$ satisfying $\Phi([\overrightarrow{G}]) = [(G, \sigma)]$. 
Hence, the function $\Phi$ is surjective, and thus, bijective.

Finally, suppose that $f:V(G) \to V(H)$ is a morphism of 
$[\overrightarrow{G}]$ to $[\overrightarrow{H}]$, This essentially means that there exists a homomorphism of an element of $[\overrightarrow{G}]$ to an element of $[\overrightarrow{H}]$. Without loss of generality, we may assume that there exists a homomorphism of 
$\overrightarrow{G}$ to $\overrightarrow{H}$. Next, let us fix a bipartition $(A,B)$ of $G$. Furthermore, we will fix a bipartition $(A', B')$ of $H$ satisfying the properties 
$f(A) \subseteq A'$ and $f(B) \subseteq B'$. It is possible to find such a bipartition $(A', B')$ of $H$ as both $G,H$ are bipartite graphs. Now note that, $f$ is a 
sign-preserving homomorphism of 
$(G, \sigma_{(\overrightarrow{G}, A, B)})$ to $(H, \sigma_{(\overrightarrow{H}, A', B')})$. That implies, $f$ is a morphism of $[\Phi(\overrightarrow{G})]$ to $[\Phi(\overrightarrow{H})]$. This proves the ``only if'' part of the statement. The ``if'' part of the statement can be proved similarly.  
\end{proof}

\begin{remark}
    In terms of category theory, if we consider categories where objects are push equivalence classes of bipartite oriented graphs and switch equivalent classes of  bipartite signed graphs,  while pushable and switchable homomorphisms play the roles of morphisms, respectively, then the above mentioned one-to-one correspondence 
    is an isomorphic covariant functor (and its inverse) which shows that the two categories are isomorphic. This is probably the underlying reason why we find similarities between the theory of pushable homomorphisms of oriented graphs and switchable homomorphisms of signed graphs. 
\end{remark}

\subsection{Applications}
We begin by showing how the entire theory of graph coloring can be viewed as a special case of pushable 
homomorphisms of oriented bipartite graphs.

\medskip  
Given a graph $G$, we construct an oriented bipartite graph $\overrightarrow{S}(G)$ as follows: we replace each edge $uv$ by a $4$-cycle of the type 
 $uw_1vw_2u$ and orient the cycle to form a non-directable $4$-cycle. 
 
 Let $A, B$ be the two partite sets of the complete bipartite graph $K_{n,n}$. Let $\overrightarrow{K}_{n,n}^*$ be the orientation of $K_{n,n}$ in which a perfect matching is oriented from $A$ to $B$ and the rest of the edges are oriented from $B$ to $A$.

 Then, the following result is an analogue of one by Naserasr, Rollova, and Sopena~\cite{naserasr2015homomorphisms}.

\begin{theorem}\label{Th sg}
    Let $G$ be a graph. Then, $\chi(G) \leq k$ if and only if $S(\overrightarrow{G}) \xrightarrow{p} \overrightarrow{K}_{k,k}^*$.
\end{theorem}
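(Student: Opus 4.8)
The plan is to transfer the statement across the one-to-one correspondence from Theorem~\ref{Th associates} and then invoke the known signed-graph analogue of Naserasr, Rollov\'a, and Sopena~\cite{naserasr2013homomorphisms}. Concretely, I would first identify the signed-graph side of every object in the statement. The target $\overrightarrow{K}_{k,k}^*$ should be the associate of a natural signed graph: I claim that the associated signed graph of $\overrightarrow{K}_{k,k}^*$ (with bipartition $A\cup B$) is $(K_{k,k},\sigma)$ where the perfect matching edges are positive and all other edges are negative. This is exactly the signed graph denoted $SP_k$ (or $\widehat{K}_{k,k}$) in the signed-graph literature, which serves as the universal target for signed-graph colourings arising from $\chi(G)\le k$. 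Next, $S(\overrightarrow{G})$ is built by replacing each edge of $G$ with an unbalanced $\overrightarrow{C}_4$; under the correspondence, an unbalanced oriented $4$-cycle corresponds precisely to an unbalanced signed $4$-cycle (a $4$-cycle with an odd number of negative edges), so the associated signed graph of $S(\overrightarrow{G})$ is exactly the signed gadget graph used in~\cite{naserasr2013homomorphisms}, namely the one obtained from $G$ by subdividing/replacing each edge with a negative $4$-cycle.

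With these identifications in place, the argument is short. First I would verify that $S(\overrightarrow{G})$ is bipartite (each edge becomes a $C_4$, and gluing $4$-cycles at vertices keeps the graph bipartite, since all cycles remain even) so that Theorem~\ref{Th associates} applies to it, and that $\overrightarrow{K}_{k,k}^*$ is visibly bipartite. Then, by Theorem~\ref{Th associates}, $S(\overrightarrow{G}) \xrightarrow{p} \overrightarrow{K}_{k,k}^*$ holds if and only if the associated signed graph of $S(\overrightarrow{G})$ admits a switchable homomorphism to $(K_{k,k},\sigma)=SP_k$. Finally, I would cite the signed-graph theorem of~\cite{naserasr2013homomorphisms}, which states precisely that the signed gadget of $G$ maps (switchably) to $SP_k$ if and only if $\chi(G)\le k$. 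Chaining the two equivalences yields $\chi(G)\le k \iff S(\overrightarrow{G}) \xrightarrow{p} \overrightarrow{K}_{k,k}^*$, as desired. Since the paper explicitly says it does not include proofs for these translated applications, the "proof" is really this chain of reductions.

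The main obstacle — and the only real content to check — is that the associates are correctly matched: that the associated signed graph of $\overrightarrow{K}_{k,k}^*$ is genuinely the target graph $SP_k$ appearing in~\cite{naserasr2013homomorphisms}, and that replacing an edge by an unbalanced $\overrightarrow{C}_4$ translates, vertex-for-vertex and with the bipartition respected, to replacing an edge by an unbalanced signed $C_4$. One subtlety is the role of the bipartition: Theorem~\ref{Th associates} requires fixed bipartitions on both sides and the notion of associate depends on the chosen partition, but the preceding propositions show that different choices yield push-equivalent (resp.\ switch-equivalent) graphs, so the equivalence $\xrightarrow{p}$ (resp.\ $\xrightarrow{s}$) is insensitive to this choice; I would note this explicitly. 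A second minor point is ensuring the gadget construction $S(\overrightarrow{G})$ is well-defined regardless of how the unbalanced $\overrightarrow{C}_4$ is oriented — but any two unbalanced $\overrightarrow{C}_4$'s on the same underlying $4$-cycle are push equivalent (they have the same directability), so the construction is well-defined up to push equivalence, which is all that matters for $\xrightarrow{p}$. Beyond these bookkeeping checks, no new work is needed: the theorem is a corollary of Theorem~\ref{Th associates} together with the cited signed-graph result.
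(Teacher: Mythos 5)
Your proposal is correct and follows exactly the route the paper intends: the paper explicitly omits proofs of these ``applications,'' stating they are direct consequences of the one-to-one correspondence of Theorem~\ref{Th associates} combined with the corresponding signed-graph results of Naserasr, Rollov\'a, and Sopena. Your careful matching of associates (the associate of $\overrightarrow{K}_{k,k}^*$ being the signed $K_{k,k}$ with a positive perfect matching and all other edges negative, and the unbalanced $\overrightarrow{C}_4$ gadget corresponding to the unbalanced signed $C_4$ gadget) is precisely the bookkeeping the paper leaves implicit.
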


\begin{proof}
    Let $\hat{S}(G)$ be the signed graph obtained by replacing each edge $uv$ of $G$ by a (signed) $4$-cycle 
    $uw_1vw_2u$ having three positive edges and one negative edge. 
    Let $(K_{k,k}, \tau)$ be the signed graph whose underlying graph is $K_{k,k}$, and the set of negative edges is a perfect matching. 
    
    Notice that, 
    $\hat{S}(G) \in \Phi([S(\overrightarrow{G})])$
    and 
    $(K_{k,k}, \tau) \in \Phi([\overrightarrow{K}_{n,n}^*])$. 
    We know due to Naserasr, Rollova, and Sopena (see Theorem 6.2 in~\cite{naserasr2015homomorphisms}) that  
    $\chi(G) \leq k$ if and only if 
    $\hat{S}(G) \xrightarrow{s} (K_{k,k}, \tau)$. Thus we are done using Theorem~\ref{Th 1-2-1}. 
\end{proof}

Observe that using Theorem~\ref{Th sg}, the equivalent formulation of the Four-Color Theorem will be: all $S(\overrightarrow{G})$ admits a pushable homomorphism to $\overrightarrow{K}_{4,4}^*$, where $G$ is a planar graph. Note that, the oriented graph $S(\overrightarrow{G})$ is thus an oriented planar bipartite graph, and furthermore, not all oriented planar bipartite graphs can be expressed as 
$S(\overrightarrow{G})$ for some planar $G$. 
Therefore, the next result  is stronger than the Four-Color Theorem. We will prove it using 
Theorem~9.5 from~\cite{naserasr2015homomorphisms}, whose proof uses the Four-Color Theorem. 

\begin{theorem}
Every planar oriented bipartite graph admits a pushable homomorphism to $\overrightarrow{K}_{4,4}^*.$
\end{theorem}

\begin{proof}
    The signed graph $(K_{4,4}, \tau)$ is obtained by assigning negative signs to the edges of a perfect matching in $K_{4,4}$ and by assigning positive signs to the rest of the edges. We know that 
    planar signed bipartite graphs admit a homomorphism to $(K_{4,4}, \tau)$ due to Naserasr, Rollov{\'a} and Sopena~\cite{naserasr2015homomorphisms}. The proof is completed by 
    using Theorem~\ref{Th sg} as 
    $\overrightarrow{K}^*_{4,4} \in \Phi^{-1}([(K_{4,4}, \tau)])$.
   \end{proof}

The Folding Lemma by
Klostermeyer and Zhang \cite{klostermeyer20002+} is very useful to study the homomorphism properties of a planar graph. This lemma claims that for each planar graph $G$ of shortest odd cycle
length $2r + 1$, and for each $k \leq r$, there is a planar homomorphic image $H$ of $G$ where every face of
$H$ is of length $2k + 1$ and the shortest odd-length cycle of $H$ is also of length $2k + 1$. By considering
negative cycles instead of odd-length cycles Naserasr, Rollova and Sopena~\cite{naserasr2013homomorphisms}, proved 
a similar result for the class of planar signed
bipartite graphs. The analogous result for oriented graphs uses a new notion of ``balance of an even cycle'' for oriented graphs. 

An oriented even ordered cycle $\overrightarrow{C}_{2n}$
on $2n$ vertices 
is \textit{balanced} if there exists $S \subseteq V(\overrightarrow{C}_{2n})$ such that 
$\overrightarrow{C}_{2n}^S$ has exactly $n$ forward and $n$ backward arcs. Notice that in the definition of balanced, whether
$\overrightarrow{C}_{2n}$ is ordered or not is redundant.  Furthermore, an oriented even cycle is \textit{unbalanced} if it is not balanced. By Theorem~\ref{th_zas}, one can observe that $\overrightarrow{C}_{2n}$
is balanced if and only if either $n$ is even and $\overrightarrow{C}_{2n}$ is directable or $n$ is odd and $\overrightarrow{C}_{2n}$ is 
non-directable. 
The length of a shortest balanced (resp., unbalanced) cycle of an oriented bipartite graph is its \textit{balanced} 
(resp., \textit{unbalanced}) girth.

\begin{theorem}
Let $\overrightarrow{G}$ be a planar oriented bipartite graph with unbalanced girth $g$. If
$C = v_0 \cdots v_{r-1}v_0$ is a balanced facial cycle of $\overrightarrow{G}$, or an unbalanced facial cycle of $\overrightarrow{G}$ with
$r>g$, then there is an integer 
$i \in \{0,\cdots,r-1\}$ such that the oriented graph $\overrightarrow{G}'$ obtained from $\overrightarrow{G}$ by identifying $v_{i-1}$ and $v_{i+1}$ (subscripts are taken modulo r) is a pushable homomorphic image of $\overrightarrow{G}$
of unbalanced girth $g$.
\end{theorem}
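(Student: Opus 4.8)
The plan is to deduce the statement from the corresponding folding lemma for planar signed bipartite graphs of Naserasr, Rollova and Sopena~\cite{naserasr2013homomorphisms}, by transporting the situation across the one-to-one correspondence of Theorem~\ref{Th associates}.

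First I would fix a bipartition $A \cup B$ of $\overrightarrow{G}$ and pass to an associated signed bipartite graph $(G,\sigma)$ via Construction~\ref{constr signed graphs}. Since the construction leaves the underlying graph untouched, $(G,\sigma)$ is planar, we may keep the same planar embedding, and its facial cycles are exactly those of $\overrightarrow{G}$. The next routine check is that under the association an (even) cycle of $\overrightarrow{G}$ is balanced if and only if the corresponding cycle of $(G,\sigma)$ is positive; this is immediate from the arc-to-sign dictionary (forward $\leftrightarrow$ positive, backward $\leftrightarrow$ negative) together with the parity count defining balance, and it is the oriented-versus-signed shadow of Theorem~\ref{th_zas} and its bipartite corollary. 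Consequently the unbalanced girth of $(G,\sigma)$ is again $g$, and the facial cycle $C$ is positive, respectively negative with $r>g$, exactly in the two cases of the hypothesis.

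Then I would apply the signed folding lemma to $(G,\sigma)$: it yields an index $i\in\{0,\dots,r-1\}$ and a signed graph $(G',\sigma')$ obtained from $(G,\sigma)$ by identifying $v_{i-1}$ and $v_{i+1}$ (subscripts modulo $r$), such that $(G',\sigma')$ is planar, has unbalanced girth $g$, and is a switchable homomorphic image of $(G,\sigma)$. Because $C$ is an even cycle, $v_{i-1}$ and $v_{i+1}$ lie in the same part of the bipartition, so the identification respects the bipartition and $(G',\sigma')$ remains bipartite. Finally I travel back along the correspondence: let $\overrightarrow{G}'$ be an associated oriented graph of $(G',\sigma')$. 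Since vertex identification commutes with the association (the association is defined edge by edge, and identifying two same-part vertices only merges their incident arcs/edges accordingly), $\overrightarrow{G}'$ is — up to push equivalence, which is all that is needed for a homomorphic image — precisely the oriented graph obtained from $\overrightarrow{G}$ by identifying $v_{i-1}$ and $v_{i+1}$. Theorem~\ref{Th associates} then converts the switchable homomorphism $(G,\sigma)\xrightarrow{s}(G',\sigma')$ into a pushable homomorphism $\overrightarrow{G}\xrightarrow{p}\overrightarrow{G}'$, and the balanced-to-positive dictionary again gives that $\overrightarrow{G}'$ has unbalanced girth $g$.

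The main obstacle is not conceptual but bookkeeping: the association depends on the chosen bipartition, so one must invoke the propositions of Section~\ref{sec4} (different partition choices give push- or switch-equivalent associates) to know every quantity in play — planarity, bipartiteness, facial structure, unbalanced girth, and the arc-to-sign correspondence — is independent of that choice, and one must confirm that identifying two same-part vertices spoils none of them. Once these compatibility verifications are recorded, the theorem is a formal consequence of Theorem~\ref{Th associates} and the signed folding lemma, which is why no separate combinatorial argument is required.
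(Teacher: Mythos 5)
Your proposal is correct and matches the paper's intended argument exactly: the paper explicitly omits the proof in this subsection, stating that these results are direct consequences of the correspondence of Theorem~\ref{Th associates} combined with the signed folding lemma of Naserasr, Rollova and Sopena. Your write-up simply supplies the compatibility checks (balanced cycles of the oriented graph correspond to positive cycles of the signed associate, and planarity, bipartiteness, facial structure and unbalanced girth survive both the association and the identification) that the paper leaves implicit.
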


\begin{proof}
A signed cycle is positive (resp., negative) if it has even (resp., odd) number of negative edges. 
Observe that, if $\overrightarrow{C}$ is an even balanced (resp., unbalanced) oriented cycle, then any $(C, \sigma) \in \Phi([\overrightarrow{C}])$ is a positive (resp., negative) signed cycle. 
It is known (see  Lemma $3.1$ in~\cite{naserasr2013homomorphisms})  
that the exact statement is true if we replace oriented by signed, balanced by positive, and unbalanced by negative. Thus, the proof follows directly applying  
Theorem~\ref{Th 1-2-1}.     
\end{proof}

The repeated application of this result, gives us the following.

\begin{theorem}
Given a planar oriented bipartite graph $\overrightarrow{G}$ of unbalanced girth $g$, there is a pushable homomorphic image $\overrightarrow{G}'$ of $\overrightarrow{G}$ such that:
\begin{enumerate}[(i)]
    \item $\overrightarrow{G}'$ is planar,
    \item $\overrightarrow{G}'$ is an oriented bipartite graph,
    \item $\overrightarrow{G}'$ is of unbalanced girth $g$,
    \item every face of $\overrightarrow{G}'$ is an unbalanced cycle of length $g$.
\end{enumerate}
\end{theorem}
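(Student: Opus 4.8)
The plan is to apply the preceding folding lemma for planar oriented bipartite graphs repeatedly, each time strictly simplifying the graph, and to stop precisely when conditions (i)--(iv) are forced to hold.

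Concretely, I would build a sequence $\overrightarrow{G}_0 = \overrightarrow{G}, \overrightarrow{G}_1, \overrightarrow{G}_2, \dots$ as follows. Suppose $\overrightarrow{G}_j$ has been constructed and, by induction, is a planar oriented bipartite graph of unbalanced girth $g$ admitting a pushable homomorphism from $\overrightarrow{G}$. If $\overrightarrow{G}_j$ has a balanced facial cycle, or an unbalanced facial cycle of length greater than $g$, then the previous theorem applies to $\overrightarrow{G}_j$ and produces $\overrightarrow{G}_{j+1}$, obtained by identifying two vertices $v_{i-1}, v_{i+1}$ lying on such a facial cycle, with $\overrightarrow{G}_{j+1}$ again planar, oriented bipartite, of unbalanced girth $g$, and a pushable homomorphic image of $\overrightarrow{G}_j$. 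Since a composition of pushable homomorphisms is a pushable homomorphism (immediate from the canonical definition, as directability of each ordered closed walk is preserved under composition), $\overrightarrow{G}_{j+1}$ is also a pushable homomorphic image of $\overrightarrow{G}$, and the induction continues. If no such facial cycle exists, I stop and set $\overrightarrow{G}' = \overrightarrow{G}_j$.

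Each step identifies two distinct vertices, so $|V(\overrightarrow{G}_{j+1})| < |V(\overrightarrow{G}_j)|$ and the process terminates after at most $|V(\overrightarrow{G})|$ steps. At the terminal graph $\overrightarrow{G}'$ there is neither a balanced facial cycle nor an unbalanced facial cycle of length greater than $g$; since $\overrightarrow{G}'$ has unbalanced girth $g$, every unbalanced facial cycle has length $\ge g$, hence length exactly $g$, and since there are no balanced faces, every face of $\overrightarrow{G}'$ is an unbalanced $g$-cycle. Together with the invariants maintained along the sequence (planarity, being oriented bipartite, unbalanced girth $g$, and being a pushable homomorphic image of $\overrightarrow{G}$), this gives (i)--(iv).

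The part needing the most care is the handling of faces: to speak of facial cycles one works with a plane graph whose faces are bounded by cycles (for instance a $2$-connected plane graph), and one must ensure that the identification $v_{i-1}\sim v_{i+1}$ does not create a loop or otherwise destroy this structure. However, this is exactly what the single-step folding lemma already guarantees, since its output is again ``a planar oriented bipartite graph of unbalanced girth $g$'' of the same kind, so the iteration never leaves the class on which the lemma is valid; no new obstacle appears beyond correctly re-invoking the hypotheses at each stage. If one prefers an explicit monotone potential in place of the vertex count, one may use $\sum_{F:\ \ell(F)>g}\bigl(\ell(F)-g\bigr)$ plus the number of balanced faces, and check that it strictly decreases at every step.
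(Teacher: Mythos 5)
Your proposal is correct and follows exactly the route the paper intends: the paper gives no written proof, stating only that ``the repeated application of this result gives us the following,'' and your iteration of the single-step folding theorem, with termination via the decreasing vertex count and closure under composition of pushable homomorphisms, is precisely that argument filled in. The details you supply (the terminal condition forcing every face to be an unbalanced $g$-cycle, and the caveat about face boundaries being genuine cycles) are sensible elaborations of what the paper leaves implicit.
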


\begin{proof}
  In~\cite{naserasr2013homomorphisms}, the equivalent result for signed graphs is proved (see Corollary $3.2$ in~\cite{naserasr2013homomorphisms}). 
  Therefore, the proof follows applying Theorem~\ref{Th 1-2-1}. 
\end{proof}

The next result is the oriented graph analogue of one due to Naserasr, Pham and Wang~\cite{naserasr2022density}, which was proved using the 
potential method~\cite{kostochka2014ore}.
An oriented graph $\overrightarrow{G}$ is  \textit{pushably $\overrightarrow{H}$-critical} if there does not exist a pushable homomorphism of $\overrightarrow{G}$ to
$\overrightarrow{H}$ but every proper subgraph of $\overrightarrow{G}$ admits a pushable homomorphism to
$\overrightarrow{H}$.

\begin{figure}[h]

    \begin{tabularx}{\textwidth}{ X X}
				\centering
    \scalebox{1}{
    \begin{tikzpicture}[scale=0.9,auto=center]
    
  \node[fill=white,circle,draw] (a1) at (0,0) {\scriptsize$v_1$};  
  \node[fill=white,circle,draw] (a2) at (2,0)  {\scriptsize$v_2$};   
  \node[fill=white,circle,draw] (a3) at (4,0)  {\scriptsize$v_3$};  
  \node[fill=white,circle,draw] (a4) at (6,0) {\scriptsize$v_4$};

  \node[fill=white,circle,draw] (b1) at (0,-2.5) {\scriptsize$u_1$};  
  \node[fill=white,circle,draw] (b2) at (2,-2.5)  {\scriptsize$u_2$};   
  \node[fill=white,circle,draw] (b3) at (4,-2.5)  {\scriptsize$u_3$};

  \node at (2,-3.5) {$(i)$};

  \draw[-,thick, blue] (a1) -- (b1);
  \draw[-,thick, blue] (a1) -- (b2);
  \draw[-,thick, blue] (a1) -- (b3);
  \draw[dashed ,thick, red] (a2) -- (b1);
  \draw[-,thick, blue] (a2) -- (b2);
  \draw[dashed,thick, red] (a3) -- (b2);
  \draw[-,thick, blue] (a3) -- (b3);
  \draw[-,thick, blue] (a4) -- (b1);
  \draw[dashed,thick, red] (a4) -- (b3);
    
\end{tikzpicture}}

&

    \centering
    \scalebox{1}{
    \begin{tikzpicture}[scale=0.9,auto=center]
    
  \node[fill=white,circle,draw] (a1) at (0,0) {\scriptsize$v_1$};  
  \node[fill=white,circle,draw] (a2) at (2,0)  {\scriptsize$v_2$};   
  \node[fill=white,circle,draw] (a3) at (4,0)  {\scriptsize$v_3$};  
  \node[fill=white,circle,draw] (a4) at (6,0) {\scriptsize$v_4$};

  \node[fill=white,circle,draw] (b1) at (0,-2.5) {\scriptsize$u_1$};  
  \node[fill=white,circle,draw] (b2) at (2,-2.5)  {\scriptsize$u_2$};   
  \node[fill=white,circle,draw] (b3) at (4,-2.5)  {\scriptsize$u_3$};

  \node at (2,-3.5) {$(ii)$};

  \draw[->,thick] (a1) -- (b1);
  \draw[->,thick] (a1) -- (b2);
  \draw[->,thick] (a1) -- (b3);
  \draw[<-,thick] (a2) -- (b1);
  \draw[->,thick] (a2) -- (b2);
  \draw[<-,thick] (a3) -- (b2);
  \draw[->,thick] (a3) -- (b3);
  \draw[->,thick] (a4) -- (b1);
  \draw[<-,thick] (a4) -- (b3);
    
\end{tikzpicture}}
    \label{fig:enter-label}

    \end{tabularx}

     \caption{$(i)$ The signed graph $(W, \tau)$. $(ii)$ The oriented graph $\overrightarrow{W}$.}
     \label{fig:enter-label1111}
\end{figure}

\begin{theorem}
    Let $\overrightarrow{C}_4$ be the unbalanced directed $4$-cycle.  
    If $\overrightarrow{G}$ is a pushably $\overrightarrow{C_4}$-critical oriented graph which is not isomorphic to $\overrightarrow{W}$ depicted in Fig.~\ref{fig:enter-label1111}$(ii)$, then
    $$|A(\overrightarrow{G})| \geq \frac{4|V(\overrightarrow{G})|}{3}.$$
\end{theorem}

\begin{proof}
    Let $(C_4, \epsilon)$ denote the signed graph whose underlying graph is a 
    $4$-cycle, and it has exactly one negative edge. Notice that $\Phi([\overrightarrow{C}_4]) = [(C_4, \epsilon)]$ and $\Phi([\overrightarrow{W}]) = [(W, \tau)]$ (see Fig.~\ref{fig:enter-label1111}.

    Let $(G, \sigma)$ be a signed graph that 
    does not admit a switchable homomorphism to $(C_4, \epsilon)$, but any of its proper subgraph does. It is known (see Theorem~$3.8$ in~\cite{naserasr2022density}) that
    $|E(G)| \geq \frac{4|V(G)|}{3}$ unless 
    $(G, \sigma)$ is switchably isomorphic to $(W, \tau)$. Notice that, the only graphs that can admit a homomorphism to $(C_4, \epsilon)$ (resp., $\overrightarrow{C}_4$) 
    are bipartite graphs. Therefore, the proof follows using Theorem~\ref{Th 1-2-1}. 
\end{proof}

The equivalent statement of the above theorem for signed graphs is important for multiple reasons~\cite{naserasr2022density}. Interested readers are suggested to consult~\cite{naserasr2022density} for further details. 

\hspace{2cm}

\begin{remark}
    The pushable chromatic number of oriented hexagonal grids and square grids are studied (see Theorems 3.1, 3.4, 4.1, 4.2 in~\cite{bensmail2023pushable}). On the other hand, 
    the chromatic number signed hexagonal grids and square grids are studied (see Theorem~7 in~\cite{jacques2021chromatic} and Theorems 10, 12 in~\cite{dybizbanski2020signed}). One can note, both hexagonal and square grids are bipartite graphs. Thus, due to Theorem~\ref{Th 1-2-1} one can conclude that  Theorems~3.1, 3.4 in~\cite{bensmail2023pushable} (resp., Theorems 4.1, 4.2 in~\cite{bensmail2023pushable}) are equivalent to 
    Theorems 7 in~\cite{jacques2021chromatic} (resp., Theorems 10, 12 in~\cite{dybizbanski2020signed}). 
\end{remark}

\section{Pushable homomorphisms to directed odd cycles}\label{sec4}
In this section, we analyse the complexity of the problem of determining whether an input oriented graph 
$\overrightarrow{G}$ admits a pushable homomorphism to an oriented odd cycle. Notice that, the complete complexity dichotomy of this problem is known due to Klostermeyer and MacGillivray~\cite{push} for general (unrestricted families) oriented graphs. However, in our proofs, the input graph is from restricted (sparse) graph families, and hence in the cases of pushable homomorphisms to oriented odd cycles, our result is a (partial) refinement to the findings of Klostermeyer and MacGillivray~\cite{push}.  We state the decision problem as follows. 

\begin{mdframed}
\noindent
\textsc{Pushable $\overrightarrow{H}$-Coloring}

\noindent
\textbf{Instance:} A graph $\overrightarrow{G}$.

\noindent
\textbf{Question:} Does there exist a pushable homomorphism of $\overrightarrow{G}$ to $\overrightarrow{H}$?
\end{mdframed}

Before going to our complexity results, let us first prove a general correspondence between the pushable homomorphism to directed odd cycles and graph coloring. 
A simple graph $G$ is \textit{$k$-colorable} if it is possible to label the vertices of $G$ using a set of $k$ colors in such a way that adjacent vertices of $G$ receive distinct labels. The decision version of the problem is as follows. 

\begin{mdframed}
\noindent
\textsc{$k$-Coloring}

\noindent
\textbf{Instance:} A graph $G$.

\noindent
\textbf{Question:} Is $G$ $k$-colorable? 
\end{mdframed}

It is well-known  that 
the \textsc{$k$-Coloring} problem is 
NP-complete for 
$k \geq 3$~\cite{hell}.

\begin{construction}\label{cons push-color}
Given a simple graph $G$, and any integer $k \geq 1$, we construct an oriented graph as follows. 
Each edge $uv$ in $G$ is replaced with two internally disjoint oriented paths $\overrightarrow{P}$  and $\overrightarrow{P'}$ of length $4k$ that connect $u$ and $v$. The parities of forward arcs (with respect to traversal from $u$ to $v$) in $\overrightarrow{P}$ and 
$\overrightarrow{P'}$ are even and odd, respectively. Let the so-obtained oriented graph be $\overrightarrow{G}^{(k)}$. Notice that there are several ways to obtain $\overrightarrow{G}^{(k)}$ from $G$. 
Irrespective of that, we get the same $\overrightarrow{G}^{(k)}$ up to push equivalence.  Thus, in the context of pushable homomorphism, the construction of $\overrightarrow{G}^{(k)}$ from $G$ is 
well-defined.
\end{construction}

\medskip

\begin{theorem}\label{thm:2k+1col}
    For any $k \geq 1$, a simple graph $G$ is $(2k+1)$-colorable if and only if $\overrightarrow{G}^{(k)}$ admits a pushable homomorphism to the directed cycle $\overrightarrow{C}_{2k+1}$. 
\end{theorem}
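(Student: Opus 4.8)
The plan is to treat a proper $(2k+1)$-coloring of $G$ and a pushable homomorphism $\overrightarrow{G}^{(k)}\xrightarrow{p}\overrightarrow{C}_{2k+1}$ as two encodings of the same data, and to pass between them edge-by-edge using Lemma~\ref{lem:4kpath}. Throughout, identify the color set $\{0,1,\dots,2k\}$ with $V(\overrightarrow{C}_{2k+1})=\mathbb{Z}_{2k+1}$, so that an arc of $\overrightarrow{C}_{2k+1}$ goes from $i$ to $i+1$; a homomorphism to $\overrightarrow{C}_{2k+1}$ is then the same as a coloring by $\mathbb{Z}_{2k+1}$ that increases by $1$ along every arc.

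First I would prove the ``only if'' direction. Let $c\colon V(G)\to\mathbb{Z}_{2k+1}$ be a proper coloring. Regard each vertex of $G$ as a vertex of $\overrightarrow{G}^{(k)}$, keep it fixed (never push it), and color it by $c$. Fix an edge $uv$ of $G$, replaced in $\overrightarrow{G}^{(k)}$ by a path $\overrightarrow{P}_{uv}$ of type $\overrightarrow{P}$ and a path $\overrightarrow{P'}_{uv}$ of type $\overrightarrow{P'}$. By Lemma~\ref{lem:4kpath} (applied with the color $c(u)$ at the tail, using translation invariance of the argument in its proof), some subset of internal vertices of $\overrightarrow{P}_{uv}$ can be pushed so that the resulting path admits the arc-respecting extension reaching $c(v)$ at $v$, since a path of type $\overrightarrow{P}$ allows all colors; and since $c(u)\neq c(v)$, the same holds for $\overrightarrow{P'}_{uv}$, whose tail forbids only the color $c(u)$ at its head. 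Performing this for every edge of $G$, the chosen internal vertices lie on pairwise internally disjoint paths, so the chosen push sets are pairwise disjoint; pushing their union turns $\overrightarrow{G}^{(k)}$ into a push-equivalent oriented graph $\overrightarrow{G}'$ on which the assembled vertex coloring is an honest homomorphism to $\overrightarrow{C}_{2k+1}$. Hence $\overrightarrow{G}^{(k)}\xrightarrow{p}\overrightarrow{C}_{2k+1}$.

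For the ``if'' direction, suppose $\overrightarrow{G}^{(k)}\xrightarrow{p}\overrightarrow{C}_{2k+1}$, witnessed by a push-equivalent $\overrightarrow{G}'$ and an honest homomorphism $g\colon\overrightarrow{G}'\to\overrightarrow{C}_{2k+1}$. By the structural result (the theorem preceding Lemma~\ref{lem:4kpath}) that every oriented graph push equivalent to $\overrightarrow{G}^{(k)}$ is again obtained from $G$ by Construction~\ref{con: oriented}, for each edge $uv$ of $G$ the vertices $u$ and $v$ are joined in $\overrightarrow{G}'$ by a path of type $\overrightarrow{P'}$, that is, one with an odd number $2j+1$ of forward arcs among its $4k$ arcs; since the total is even, $1\le 2j+1\le 4k-1$, hence $1\le j+1\le 2k$. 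Following $g$ along this path gives $g(v)\equiv g(u)+(2j+1)-(4k-2j-1)\equiv g(u)+4(j+1)\pmod{2k+1}$, using $4k\equiv-2\pmod{2k+1}$. Since $1\le j+1\le 2k$ and $\gcd(4,2k+1)=1$, the shift $4(j+1)$ is nonzero modulo $2k+1$, so $g(v)\neq g(u)$ — which is exactly the $\overrightarrow{P'}$-clause of Lemma~\ref{lem:4kpath}. Therefore the restriction $g|_{V(G)}$ is a proper $(2k+1)$-coloring of $G$.

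I expect the bulk of the work — the modular arithmetic, already carried out inside the proof of Lemma~\ref{lem:4kpath} — to be routine. The step that needs genuine care is the amalgamation in the ``only if'' direction: one must verify that the push sets chosen independently on the two paths of each edge, and over all edges, can be combined into a single legitimate push of $\overrightarrow{G}^{(k)}$. This works precisely because the replacement paths are internally disjoint and because no vertex of $G$ is ever pushed, which is also what guarantees that the resulting homomorphism restricts to the prescribed coloring $c$ on $V(G)$.
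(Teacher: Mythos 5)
Your proposal is correct and follows essentially the same route as the paper's proof: both directions reduce to Lemma~\ref{lem:4kpath}, using the $\overrightarrow{P}$/$\overrightarrow{P'}$ pair to encode ``any pair of colors'' versus ``any pair of distinct colors'' at the endpoints of each edge gadget. Your extra care about amalgamating the disjoint push sets on internally disjoint paths, and your explicit appeal to the structural theorem in the converse direction, only make explicit what the paper's proof leaves implicit.
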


\begin{proof} 
Notice that the portion corresponding to an edge $uv$ of $G$ in $\overrightarrow{G}^{(k)}$ is two internally disjoint oriented paths $\overrightarrow{P}$ and $\overrightarrow{P}'$ connecting the vertices $u$ and $v$. 
Moreover, $\overrightarrow{P}$ (resp., $\overrightarrow{P}'$) has even (resp., odd) number of forward and backward arcs, with respect to traversal from $u$ to $v$. For convenience, let us denote this particular induced subgraph of $\overrightarrow{G}^{(k)}$ by $\overrightarrow{X}$. 

Notice that, if we push some of the internal vertices of $\overrightarrow{P}$ 
(resp., $\overrightarrow{P}'$), 
the parity of the number of forward arcs remain unchanged. Since there are exactly $4k-1$ internal vertices in $\overrightarrow{P}$ (resp., $\overrightarrow{P}'$), it is possible to obtain 
$2^{4k-1}$ different orientations of $P$ 
having even (resp., odd) number of forward and backward arcs. 
On the other hand, as there are 
exactly $2^{4k} = 2 \cdot 2^{4k-1}$ different orientations of $P$ (since it has $4k$ edges),
we can say that it is possible to obtain any orientation of $P$ having even (resp., odd) number of forward and backward arcs by pushing only the internal vertices of $\overrightarrow{P}$ 
(resp., $\overrightarrow{P}'$).

If we push one of the vertices among $u$ and $v$ in $\overrightarrow{X}$, the oriented path $\overrightarrow{P}$ 
(resp., $\overrightarrow{P}'$) 
gets modified to an oriented path having odd (resp., even) number of forward and backward arcs. That means, pushing exactly one of $u$ and $v$ 
exchanges the role of $\overrightarrow{P}$ and $\overrightarrow{P}'$ in $\overrightarrow{X}$.

Let $\overrightarrow{C}_{2k+1} = w_0w_1w_2\cdots w_{2k}w_0$ be the directed cycle on $2k+1$ vertices. 
Next we want to show that  it is possible to find a pushable homomorphism 
$f: \overrightarrow{X} \xrightarrow{p} \overrightarrow{C}_{2k+1}$  with $f(u) = w_i$ and $f(v) = w_j$ if and only if $i \neq j$, where  $i,j \in \{0, 1, 2, \cdots, 2k\}$. 
Notice that, this local information actually can be implemented globally. That is, if the above is correct, then we can say that $\overrightarrow{G}^{(k)}$ admits a pushable homomorphism to $\overrightarrow{C}_{2k+1}$
if and only if it is possible to label the vertices of $G$ using the vertices of 
$\overrightarrow{C}_{2k+1}$ in such a way that adjacent vertices obtain different labels. This is 
essentially equivalent to the statement of the theorem we are trying to prove. Therefore, it is enough to show that 
given a partial function 
$f: V(\overrightarrow{X}) \to V(\overrightarrow{C}_{2k+1})$ defined by $f(u) = w_i$ and $f(v) = w_j$, it is possible to extend  $f$ to a pushable homomorphism of $\overrightarrow{X}$
to $\overrightarrow{C}_{2k+1}$ if and only if $i \neq j$. 
Notice that,  it is the same as obtaining a push equivalent 
orientation $\overrightarrow{X}'$ that admits a homomorphism to $\overrightarrow{C}_{2k+1}$ which is an extension of $f$ if and only if $i \neq j$. Moreover, since pushing the vertex $u$ or $v$ 
only toggles the roles of $\overrightarrow{P}$
and $\overrightarrow{P}'$, if such a $\overrightarrow{X}'$ exist, without loss of generality we may assume that it is possible to obtain $\overrightarrow{X}'$ from $\overrightarrow{X}$ without pushing the vertices $u$ and $v$.
We will now formulate this problem algebraically.

Suppose that the oriented paths $\overrightarrow{P}$ and $\overrightarrow{P}'$ got modified to $\overrightarrow{P}_1$ and $\overrightarrow{P}'_1$, respectively. Moreover suppose that there are exactly $b$ (a positive even number less than or equal to $4k$) backward arcs in $\overrightarrow{P}_1$ and $b'$ (a positive odd number less than $4k$) backward arcs in $\overrightarrow{P}'_1$. Therefore, it is possible to extend $f$ to a homomorphism of $\overrightarrow{X}'$ to $\overrightarrow{C}_{2k+1}$ if and only if the following equations are satisfied:
\begin{equation}
    i + (4k-b) - b \equiv j~(\bmod~2k+1),
\end{equation}\label{equation even}
\begin{equation}
    i + (4k-b') - b' \equiv j~(\bmod~2k+1).
\end{equation}\label{equation odd}
Since $b$ (resp., $b'$) is an even (resp., odd) number less than or equal to $4k$, we may assume that $b = 2a$ and $b' = 2a'+1$ where 
$a \in \{0, 1, \cdots, 2k\}$ and $a' \in \{0, 1, \cdots, 2k-1\}$. Moreover, we may also assume that 
$(j-i) = \ell$ is some integer between $0$ to $2k$.  
Thus, the modified Equations~(\ref{equation even}) and~(\ref{equation odd}) will be as follows. 
\begin{equation}
4k-4a = 4(k-a) \equiv \ell~(\bmod~2k+1),
\end{equation}\label{equation even modified}
\begin{equation}
4k-4a'-2 = 4(k-a')-2 \equiv \ell~(\bmod~2k+1).
\end{equation}\label{equation odd modified}
As $4$ and $2k+1$ are co-primes and $a$ varies among all integers from $0$ to $2k$, the numbers of the form $4(k-a)~(\bmod~2k+1)$ gives us $2k+1$ distinct numbers modulo $2k+1$.  
To be precise,   Equation~(\ref{equation even modified}) has a solution for all $\ell \in \{0, 1, \cdots, 2k\}$. 

Similarly, as $2$ and
$2k+1$ are co-primes and $a$ varies among all integers from $1$ to $2k-1$, the numbers of the form $4(k-a')-2~(\bmod~2k+1)$ gives us $2k$ distinct numbers modulo $2k+1$.   
To be precise,   Equation~(\ref{equation odd modified}) has a solution for all $\ell \in \{0, 1, \cdots, 2k\}$ except one value.  
Notice that,
for $\ell =0$, Equation~(\ref{equation odd modified}) 
will have any solution if and only if there is a solution for one of the following equations:
\begin{equation}\label{equation l=0}
    4k=4a'+2. 
\end{equation}

\begin{equation}\label{equation l=2k+1}
    4k-4a'-2 = \pm (2k+1).
\end{equation}
However, Equation~(\ref{equation l=0}) doesn't have a solution as the left hand side is not divisible by $4$, whereas the right hand side of it is. Furthermore, Equation~(\ref{equation l=2k+1}) does not have a solution as the left hand side is even while the right hand side is odd. Thus, the proof is completed. 
\end{proof}

\begin{corollary}
    Let $G^*$ be a graph obtained by adding a new vertex $z$ to $G$ which is adjacent to all vertices of $G$ and let  $\overrightarrow{G}^{(k)}$ be the oriented graph due to Construction~\ref{cons push-color}. Then the following are equivalent
    for all $k \geq 3$:
    \begin{enumerate}[(i)]
        \item The graph $G$ is $(2k+1)$-colorable. 

        \item The graph $G^*$ is $(2k+2)$-colorable. 

        \item The oriented graph $\overrightarrow{G}^{(k)}$ admits a pushable homomorphism to the directed cycle $\overrightarrow{C}_{2k+1}$ of order $2k+1$. 
    \end{enumerate}
\end{corollary}

\begin{proof}
    The equivalence of (i) and (ii) is trivial while the equivalence of (i) and (iii) follows directly from Theorem~\ref{thm:2k+1col}. 
\end{proof}

\begin{remark}
    In the theory of homomorphisms of graphs, homomorphism to an odd cycle having $2k+1$ vertices corresponds to circular 
    $\left(2+\frac{1}{k}\right)$-coloring~\cite{zhu2001circular}, 
    which is integral in the study of Jagear’s
Conjecture (disproved) and it's restriction for planar graphs (open)~\cite{jaeger1984circular}. Coincidentally, pushable homomorphisms of oriented graphs to oriented odd cycles 
    turn out to be of special interest since it captures the notion of $k$-coloring for all 
    $k \geq 3$. 
\end{remark}

Next we are going to show that the 
\textsc{Pushable $\overrightarrow{C}_{2k+1}$-Coloring} 
is an NP-complete problem even for sparse graphs.

\begin{theorem}
    The \textsc{Pushable $\overrightarrow{C}_{2k+1}$-Coloring} problem is NP-complete for the family of bipartite graphs having girth at least $8k$, where $\overrightarrow{C}_{2k}$ is any oriented cycle on $2k$ vertices.  
\end{theorem}

\begin{proof}
  Let $G$ be a simple graph with $n$ vertices and $m$ edges. Then notice that $\overrightarrow{G}^{(k)}$ has $n+2m(4k-1)=n+8mk-2m$ vertices and $8km$ arcs. That means, the number of vertices and arcs of 
  $\overrightarrow{G}^{(k)}$ is $O(n+m)$. 
  Moreover, observe that the underlying graph of $\overrightarrow{G}^{(k)}$ is a bipartite graph with girth at least $8k$. 
  It is well-known that the \textsc{$k$-Coloring} problem is NP-complete for all $k \geq 3$. Thus, using the correspondence established in Theorem~\ref{thm:2k+1col}, we can conclude this proof. 
\end{proof}

\section{Conclusions}\label{sec6}
\noindent (1) While Zaslavsky's Lemma acts as the
backbone for the theory of signed graphs, one of the 
major reasons why signed graphs have a deep connection 
to graph minor theory is due to its definition of minor~\cite{naserasr2015homomorphisms} 
that preserves the signs of the cycles. 
Since we already have an oriented analogue of the Zaslavsky's Lemma (Theorem~\ref{th_zas}), is it possible 
to have an analogue of ``signed minor'' in oriented 
graphs that interacts ``nicely'' with the push 
operation? The term ``nicely'' will vaguely refer to preserving directability of ordered cycles. 

\medskip

\noindent (2) There are two ``consistent'' types of signed graphs that play important roles in the theory of homomorphisms of signed graphs~\cite{naserasr2015homomorphisms}: 
odd signed graphs (possible to switch some vertices and make all edges negative) and 
bipartite signed graphs (underlying graphs are bipartite)~\cite{naserasr2015homomorphisms}. Due to Theorem~\ref{Th 1-2-1} proved in this article, we have seen that the exact oriented analogues of the latter are bipartite oriented graphs. However, what 
a natural oriented analogue of the former could be, remains an open question.

\medskip

\noindent (3) There are several notions that are 
(or could be) defined for oriented graphs, 
but not for signed graphs such as 
oriented arc coloring and oriented chromatic index, 
oriented total coloring and oriented total chromatic number, 
deeply critical oriented graphs~\cite{ochem2008oriented, bensmail2024oriented, borodin2001deeply}, etc. 
On the other hand, many signed graph notions 
like $0$-free-coloring, 
circular chromatic number,
flows~\cite{NWZ21, raspaud2011circular}, etc. 
are yet to be translated in the theory of oriented graphs. It seems to be a challenging task to, firstly,  find analogues of these notions, and secondly, justify why the notion is a natural analogue. However, Theorem~\ref{Th 1-2-1} addresses the second issue partially. We know that any analogous notions in the theory of oriented graphs and signed graphs must behave the exact same way when restricted to the class of bipartite graphs. Therefore, it will be an interesting task to look for suitable analogues of notions to enrich both the theories. 

\medskip

\noindent (4) It will be interesting to study oriented graphs with respect to pushable homomorphisms as a graph category. 
To be precise, let $\mathcal{C}^*_p$ be the category in which oriented graphs play the role of objects and pushable homomorphisms play the role of morphisms. As a special case of the works done in~\cite{sen2022homomorphisms}, we know that categorical product and co-product exists for $\mathcal{C}_p$. Restricting two general open questions asked in~\cite{sen2022homomorphisms}, we would like to enquire that: (i) does there exist an analog of the notion of exponential graphs~\cite{hell2004graphs} in $\mathcal{C}_p$, (ii) does there exist an analog of the density theorem~\cite{hell2004graphs} in $\mathcal{C}_p$. Both these questions are also open in the context of signed graphs. 

\medskip

\noindent (5) Since the complete dichotomy characterization for the 
\textsc{pushable $\overrightarrow{H}$-Coloring}
problem is known due to Klostermeyer and MacGillivray~\cite{push}, it is interesting to study the complexity of the problem for restricted classes of graph. We did this for sparse graphs where our $\overrightarrow{H}$ was directed odd cycles. However, one may choose a different (family) $\overrightarrow{H}$ and study the complexity of the problem for various graph classes. In particular, it will be interesting to have polynomial time solution for the problem for important graph (sub)classes such as graphs having low maximum degree, maximum average degree, edge density, treewidth, etc. 

\medskip

\noindent \textbf{Acknowledgements:}  This work is partially supported by SERB-MATRICS ``Oriented chromatic and clique number of planar graphs'' (MTR/2021/000858), Academy of Finland grant number 338797 and Centro de Modelamiento Matemático (CMM) BASAL fund FB210005 for center of excellence from ANID-Chile.

\bibliographystyle{abbrv}
\bibliography{reference.bib}

\end{document}